\documentclass[11pt]{article}
\usepackage[utf8]{inputenc}
\usepackage[T1]{fontenc}
\usepackage{graphicx}
\usepackage{longtable}
\usepackage{wrapfig}
\usepackage{rotating}
\usepackage[normalem]{ulem}
\usepackage{amsmath}
\usepackage{amssymb}
\usepackage{capt-of}
\usepackage{hyperref}
\usepackage{amsmath,amsthm,amssymb,mathtools}
\usepackage{enumitem}
\usepackage{hyperref}
\usepackage{tikz}
\usepackage{tikz-cd}
\newcommand{\Meas}{\operatorname{Meas}}
\newtheorem{theorem}{Theorem}

\newtheorem{lemma}[theorem]{Lemma}

\author{Evan Misshula}
\date{\today}
\title{Signed Measures as the Linear Envelope of Positive Measures}
\hypersetup{
 pdfauthor={Evan Misshula},
 pdftitle={Signed Measures as the Linear Envelope of Positive Measures},
 pdfkeywords={},
 pdfsubject={},
 pdfcreator={Emacs 30.2.50 (Org mode 9.7.11)}, 
 pdflang={English}}
\begin{document}

\maketitle
\begin{abstract}
Signed measures are traditionally introduced as countably additive set
functions that admit both positive and negative values. The classical
Jordan decomposition theorem shows that every finite signed measure
may be expressed uniquely as the difference of two mutually singular
positive measures. While this theorem provides a structural
description of signed measures, it does not explain why signed
measures arise naturally from the theory of positive measures.

In this paper we give a categorical interpretation of this passage.
For every measurable space \(\Omega\), we show that the abelian group
of finite signed measures satisfies a universal property with respect
to the commutative monoid of finite positive measures: every additive
map from positive measures into an abelian group extends uniquely to a
group homomorphism on signed measures. In this sense, signed measures
provide the canonical additive extension of positive measure theory.

We compare this construction with classical Grothendieck completion,
clarifying both the similarities and the additional structure arising
from countable additivity and Jordan decomposition. The result places
the transition from positive to signed measures within a familiar
pattern of completion and linearization constructions and provides a
conceptual explanation for the central role of signed measures in
analysis and probability.
\end{abstract}
\section{Introduction}
\label{sec:org588845d}

Measure theory begins with positive measures, which assign
nonnegative values to measurable sets and admit a natural notion of
addition \cite{halmos1950,folland1999,cohn2013}. Positive
measures therefore form a commutative monoid under pointwise
addition. Many of the central constructions of measure theory,
however, require subtraction as well as addition. Signed measures,
charges, and related linear structures arise naturally throughout
analysis and probability theory, where they play a fundamental role in
decomposition theorems, integration theory, and functional-analytic
dualities \cite{halmos1950,folland1999}.

A fundamental result of measure theory, the Jordan decomposition
theorem, states that every finite signed measure may be written
uniquely as the difference of two mutually singular positive measures
\cite{halmos1950,folland1999,cohn2013}. Historically, this result
is closely related to Jordan's work on functions of bounded variation
\cite{jordan1881}. While this classical theorem provides a powerful
structural description, it invites a deeper question: what structural
necessity is fulfilled when passing from the positive cone to the
linear envelope?

A recurring pattern in mathematics is that positive structures
frequently need subtraction to become fully expressive. The passage
from the natural numbers to the integers is the classical
example. Similar phenomena occur throughout algebra, topology, and
K-theory, where universal completion constructions enlarge a positive
theory into a linear one \cite{maclane1998,awodey2010}. Viewed
categorically, the passage from positive to signed measures mirrors
this exact universal phenomenon. Since positive measures already carry
a natural commutative monoid structure under addition, this paper
demonstrates that signed measures arise as the object characterized by
a universal property to a corresponding universal completion
problem. Every additive map defined on positive measures extends
uniquely to signed measures; in this sense, they are the canonical
linearization of the additive structure already present in the
positive theory.
\subsection{Main theorem}
\label{sec:orgf41cb1c}

Let \(\Meas_{+}(\Omega)\) denote the commutative monoid of finite
positive measures on a measurable space \(\Omega\), and let
\(\Meas_{\pm}(\Omega)\) denote the abelian group of finite signed
measures.

\begin{theorem}[Main Theorem]
Let \(\Omega\) be a measurable space and let \(A\) be an abelian
group. Then every commutative-monoid homomorphism

\[
\Phi : \Meas_{+}(\Omega) \to A
\]

extends uniquely to a group homomorphism

\[
\widetilde{\Phi} : \Meas_{\pm}(\Omega) \to A .
\]

Consequently, \(\Meas_{\pm}(\Omega)\) is the universal additive
extension of \(\Meas_{+}(\Omega)\).
\label{thm:main}
\label{org3738817}
\end{theorem}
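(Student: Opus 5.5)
Since every finite signed measure is a difference of two finite positive measures, the plan is to define the extension on such differences, check that it is well defined, and then verify additivity and uniqueness. Throughout, $\Meas_{+}(\Omega)$ is regarded as a submonoid of $\Meas_{\pm}(\Omega)$ via the inclusion $\mu \mapsto \mu$.

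\textbf{Step 1 (every signed measure is a difference).} By the Jordan decomposition theorem, each $\nu \in \Meas_{\pm}(\Omega)$ can be written as
\[
\nu = \nu^{+} - \nu^{-},
\qquad \nu^{\pm} \in \Meas_{+}(\Omega).
\]
Here $\nu^{\pm}$ are finite because $\nu$ is finite. So the inclusion $\Meas_{+}(\Omega) \hookrightarrow \Meas_{\pm}(\Omega)$ has image generating $\Meas_{\pm}(\Omega)$ as a group.

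\textbf{Step 2 (uniqueness).} Any group homomorphism $\widetilde{\Phi}$ extending $\Phi$ must satisfy
\[
\widetilde{\Phi}(\nu) = \Phi(\nu^{+}) - \Phi(\nu^{-}).
\]
Hence it is determined by $\Phi$.

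\textbf{Step 3 (existence).} Define $\widetilde{\Phi}(\mu_1 - \mu_2) := \Phi(\mu_1) - \Phi(\mu_2)$ for any $\mu_1, \mu_2 \in \Meas_{+}(\Omega)$.
\begin{itemize}
\item \emph{Well-definedness.} This is the main point. Suppose $\mu_1 - \mu_2 = \mu_1' - \mu_2'$ as signed measures. Since all four measures are finite, we can add setwise and get the equality $\mu_1 + \mu_2' = \mu_1' + \mu_2$ in $\Meas_{+}(\Omega)$. Additivity of $\Phi$ then gives $\Phi(\mu_1) + \Phi(\mu_2') = \Phi(\mu_1') + \Phi(\mu_2)$ in $A$, and rearranging in the group $A$ yields well-definedness. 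Finiteness is essential here: with infinite values, $\infty - \infty$ would obstruct the rearrangement.
\item \emph{Additivity.} Given $\nu = \mu_1 - \mu_2$ and $\nu' = \mu_1' - \mu_2'$, we have $\nu + \nu' = (\mu_1 + \mu_1') - (\mu_2 + \mu_2')$. Well-definedness together with additivity of $\Phi$ then gives $\widetilde{\Phi}(\nu + \nu') = \widetilde{\Phi}(\nu) + \widetilde{\Phi}(\nu')$.
\item \emph{Extension.} Writing $\mu = \mu - 0$ and using $\Phi(0) = 0$ shows that $\widetilde{\Phi}$ restricts to $\Phi$ on $\Meas_{+}(\Omega)$.
\end{itemize}

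\textbf{Step 4 (identification with the Grothendieck group).} The inclusion $\Meas_{+}(\Omega) \to \Meas_{\pm}(\Omega)$ is injective, and its image generates $\Meas_{\pm}(\Omega)$ by Step 1. The monoid $\Meas_{+}(\Omega)$ is cancellative, because finite measures can be subtracted setwise. Together these give $\Meas_{\pm}(\Omega) \cong K(\Meas_{+}(\Omega))$, which is the ``consequently'' clause. It also follows abstractly from Steps 2 and 3, since the universal property determines the object up to unique isomorphism.

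The only substantive obstacle is the appeal to Jordan decomposition, or to a Hahn decomposition, in Step 1. Everything after it is routine monoid algebra.
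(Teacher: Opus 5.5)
Your proof is correct and follows the paper's argument. Jordan decomposition supplies a representation $\mu=\alpha-\beta$ by finite positive measures. The setwise cancellation $\alpha+\beta'=\alpha'+\beta$ (the paper's Lemma~\ref{lem:independence}) makes $\Phi(\alpha)-\Phi(\beta)$ independent of the representation. Additivity, the extension property, and uniqueness are then verified exactly as in the paper.

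The differences are minor. You use the independence argument for well-definedness, so you only need the existence half of Jordan decomposition; the paper fixes the Jordan decomposition, uses its uniqueness for well-definedness, and invokes the lemma only for additivity. Your Step~4 also makes explicit that $\Meas_{\pm}(\Omega)\cong K(\Meas_{+}(\Omega))$ as groups, which the paper treats only as an analogy.
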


Informally, the theorem states that every construction compatible with
the finite additive structure of positive measures factors uniquely
through signed measures. Because we restrict our attention to the
monoid of finite positive measures, the completion behaves purely
algebraically without introducing topological convergence pathologies
in the arbitrary target group \(A\). Thus the passage

\[
\Meas_{+}(\Omega)
\longrightarrow
\Meas_{\pm}(\Omega)
\]

is canonical in the same sense that the passage from a commutative
monoid to its associated abelian group is canonical.

The classical Grothendieck construction associates an abelian group to
a commutative monoid by formally adjoining additive inverses.
Examples include the construction of the integers from the natural
numbers and the construction of \(K\)-groups from categories of vector
bundles.

The relationship between positive and signed measures exhibits many of
the same features. Positive measures form a commutative monoid,
signed measures form an abelian group, and Jordan decomposition
expresses signed measures as differences of positive measures.

Nevertheless, the measure-theoretic setting contains additional
structure arising from sigma-additivity and mutual singularity. For
this reason, signed measures should not be identified naively with an
abstract Grothendieck completion. Rather, they realize a
measure-theoretic instance of the same universal phenomenon. One of
the aims of this paper is to make this analogy precise while
clarifying both its strengths and its limitations.

The theory of positive measures, signed measures, and Jordan
decomposition is classical
\cite{halmos1950,folland1999,cohn2013}. The contribution of the
present paper is not a new decomposition theorem but rather a new
interpretation of the existing theory.

Specifically, we show that finite signed measures satisfy a universal
property analogous to additive completion. While Jordan decomposition
is classical, the contribution of this paper is the identification and
proof of the associated universal property. This characterization
places the passage from positive measures to signed measures within
the broader context of completion and linearization constructions in
category theory.

Section 2 develops the additive structure of positive measures and
establishes the categorical framework in which the construction takes
place. Section 3 reviews signed measures and Jordan decomposition.
Section 4 proves the main universal property. Section 5 compares the
result with classical Grothendieck completion and explains the sense
in which signed measures provide a linearization of positive measure
theory. The remaining sections discuss consequences, connections with
categorical probability, and directions for future work.
\section{Positive Measures as an Additive Theory}
\label{sec:org2f00c6b}

Let \((\Omega,\mathcal F)\) be a measurable space; that is,
\(\mathcal F\) is a \(\sigma\)-algebra of subsets of \(\Omega\).
A finite positive measure on \((\Omega,\mathcal F)\) is a function

\[
\mu : \mathcal F \to [0,\infty)
\]

such that

\[
\mu(\varnothing)=0
\]

and

\[
\mu\left(\bigcup_{n=1}^{\infty} E_n\right)=\sum_{n=1}^{\infty}\mu(E_n)
\]

whenever \((E_n)_{n\ge 1}\) is a countable family of pairwise disjoint
measurable sets \cite{halmos1950,folland1999,cohn2013}.  A
measure is finite if \(\mu(\Omega)<\infty\).

Positive measures are the basic objects of classical measure theory.
They assign nonnegative size, mass, probability, or volume to
measurable sets and form the starting point for the development of
integration and probability.

Throughout this paper we write

\[
\Meas_{+}(\Omega)
\]

for the commutative monoid of finite positive measures on
\(\Omega\).

Positive measures admit a natural addition. Given

\[
\mu,\nu \in \Meas_{+}(\Omega),
\]

their sum is defined pointwise by

\[
(\mu+\nu)(E)
=
\mu(E)+\nu(E).
\]

for every measurable set (E).

Because sums of positive measures are again positive measures, this
operation is closed on \(\Meas_{+}(\Omega)\). The zero measure serves
as an identity element, and associativity and commutativity follow
immediately from the corresponding properties of addition in
\([0,\infty]\).

Consequently,

\[
(\Meas_{+}(\Omega),+,0)
\]

forms a commutative monoid.

This observation is elementary, but it is the structural starting
point for everything that follows. The collection of positive
measures is not merely a set of objects; it already carries a
nontrivial additive structure.
\subsection{Pushforwards and functoriality}
\label{sec:org3f2753a}

A measurable map

\[
f : \Omega \to \Omega'
\]

transports measures by pushforward. Given a positive measure
\(\mu \in \Meas_{+}(\Omega)\), its pushforward (sometimes denoted
\(f_{\sharp}\mu\) in the measure-theoretic literature)is defined by

\[
f_{*}\mu(E)=\mu(f^{-1}(E))
\]

for every measurable subset \(E\subseteq \Omega'\) \cite{folland1999,villani2009}.  

Pushforward preserves positivity and countable additivity, so
\(f_{*}\mu\) is again a positive measure
\cite{halmos1950,folland1999}.

Moreover,

\[
(id_{\Omega})_{*}=id
\]

and

\[
(g\circ f)_{*}=g_{*}\circ f_{*}.
\]

Thus the assignments

\[
\Omega \longmapsto \Meas_{+}(\Omega),
\qquad
f \longmapsto f_{*},
\]

define a functor from the category of measurable spaces to the
category of commutative monoids.  Although each \(\Meas_{+}(\Omega)\)
carries additional structure arising from countable additivity, only
the underlying commutative-monoid structure will be needed for the
universal property established below.

The preceding discussion leaves only one essential deficiency:
positive measures can be added but not subtracted. Any additive theory
built from positive measures therefore points beyond the cone
\(\Meas_{+}(\Omega)\) toward a larger setting in which additive
inverses exist.

The next section shows that the Jordan decomposition theorem provides
exactly this enlargement. Signed measures arise as the minimal
extension in which subtraction becomes available while preserving the
additive structure of positive measures.
\section{Constructing Signed Measures}
\label{sec:orgfa5130e}

\subsection{Jordan decomposition}
\label{sec:org09e60b9}

The absence of additive inverses suggests enlarging the collection of
positive measures to a larger class in which subtraction becomes
possible. The resulting objects are the signed measures of classical
measure theory.

A finite signed measure on \((\Omega,\mathcal F)\) is a countably
additive function

\[
\mu:\mathcal F\to\mathbb R
\]

whose total variation is finite and that may take both positive and
negative values \cite{halmos1950,folland1999,cohn2013}.

The fundamental structural result governing signed measures is the
Jordan decomposition theorem. In its modern measure-theoretic form,
the theorem states that every finite signed measure may be expressed
uniquely as the difference of two mutually singular positive measures
\cite{halmos1950,folland1999,cohn2013}. Historically, this
result is closely related to Jordan's work on functions of bounded
variation \cite{jordan1881}.

\begin{theorem}[Jordan Decomposition]
Every finite signed measure (\(\mu\)) admits a unique decomposition

\[
\mu=\mu^{+}-\mu^{-},
\]

where \(\mu^{+}\) and \(\mu^{-}\) are finite positive measures that
are mutually singular.
\end{theorem}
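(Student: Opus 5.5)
The plan is the classical route: first prove the Hahn decomposition, then use it to define $\mu^{\pm}$, and finally check mutual singularity and uniqueness. Fix a finite signed measure $\mu$ on $(\Omega,\mathcal F)$. Since $\mu$ is real-valued, $|\mu(E)|<\infty$ for all $E$. Call a set $P\in\mathcal F$ \emph{positive} if $\mu(F)\ge 0$ for every measurable $F\subseteq P$, and \emph{negative} analogously. The goal is a partition $\Omega=P\sqcup N$ with $P$ positive and $N$ negative. Given this, I will define
\[
\mu^{+}(E)=\mu(E\cap P),\qquad \mu^{-}(E)=-\mu(E\cap N).
\]
These are finite positive measures, countable additivity being inherited from $\mu$. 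Their difference is $\mu$, and they are mutually singular because $\mu^{+}(N)=0=\mu^{-}(P)$.

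The main obstacle is constructing the Hahn decomposition. I would proceed in two steps.

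First, I will prove a lemma: every $E$ with $\mu(E)>0$ contains a positive subset $A$ with $\mu(A)\ge\mu(E)$. Suppose $E$ is not already positive. Let $n_1$ be the least integer such that some $F_1\subseteq E$ has $\mu(F_1)<-1/n_1$, and remove $F_1$. Repeat this inside $E\setminus F_1$, and so on. Set $A=E\setminus\bigcup_k F_k$. Then
\[
\mu(A)=\mu(E)-\sum_k\mu(F_k)\ge\mu(E)>0,
\]
and finiteness of $\mu(A)$ forces $\sum_k 1/n_k<\infty$, so $n_k\to\infty$. If $A$ contained a set $F$ with $\mu(F)<-1/m$ for some fixed $m$, this would contradict the minimality of $n_k$ once $n_k>m$. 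Hence $A$ is positive.

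Second, let $s=\sup\{\mu(A): A \text{ positive}\}$. Choose positive sets $A_j$ with $\mu(A_j)\to s$, and put $P=\bigcup_j A_j$. A countable union of positive sets is positive: disjointify the $A_j$ and use countable additivity. Monotonicity on subsets of a positive set then gives $\mu(P)=s$, so $s<\infty$. Let $N=\Omega\setminus P$. If some $E\subseteq N$ had $\mu(E)>0$, the lemma would produce a positive $A\subseteq E$ with $\mu(A)>0$. Then $P\cup A$ would be positive with $\mu(P\cup A)>s$, a contradiction. So $N$ is negative. This completes existence.

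For uniqueness, suppose $\mu=\nu^{+}-\nu^{-}$ with $\nu^{+}\perp\nu^{-}$, concentrated on disjoint sets $P'$ and $N'=\Omega\setminus P'$ respectively. Then $P'$ is positive and $N'$ is negative for $\mu$, since $\mu(F)=\nu^{+}(F)\ge 0$ for $F\subseteq P'$, and similarly for $N'$. The symmetric difference $P\triangle P'$ lies in $P\cap N'$ union $P'\cap N$. Each of these pieces is simultaneously positive and negative, so it is $\mu$-null together with all its subsets. Therefore
\[
\nu^{+}(E)=\mu(E\cap P')=\mu(E\cap P)=\mu^{+}(E),
\]
and likewise $\nu^{-}=\mu^{-}$. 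The only delicate point in this step is checking carefully that every measurable subset of $P\triangle P'$ has $\mu$-measure zero.
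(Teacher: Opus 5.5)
The paper does not prove this statement. It quotes the Jordan decomposition as a classical result and cites Halmos, Folland and Cohn, so there is no in-paper argument to compare yours against.

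Your proposal follows the standard route in those references, and it is correct. The route is: a Hahn decomposition via the lemma that strips off sets of measure $<-1/n_k$ with $n_k$ minimal, then a supremum over positive sets, then restriction of $\mu$ to $P$ and $N$. The key steps check out:
\begin{itemize}
\item $\sum_k 1/n_k<\infty$, together with the minimality of each $n_k$, does make $A$ positive.
\item $\mu(P)=s$ gives both $s<\infty$ and the contradiction $\mu(P\cup A)=s+\mu(A)>s$ showing that $N$ is negative.
\item In the uniqueness step, $P\cap N'$ and $P'\cap N$ are $\mu$-null together with all their measurable subsets, because positivity and negativity pass to subsets. This gives $\mu(E\cap P')=\mu(E\cap P\cap P')=\mu(E\cap P)$.
\end{itemize}

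Two cosmetic fixes are needed. ``Least integer'' should read ``least positive integer.'' You should also say that the recursion halts, with $A$ the current remainder, if some $E\setminus(F_1\cup\dots\cup F_k)$ is already positive.

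The paper actually uses much less than you prove. Because of the Independence of Positive Representation lemma, the proof of the Universal Property only needs every finite signed measure to be \emph{some} difference of finite positive measures. Neither mutual singularity nor uniqueness of the decomposition carries weight there. The paper's appeal to uniqueness for well-definedness could be replaced by that lemma.
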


This theorem expresses every signed measure in terms of positive
measure data. Every signed measure may be reconstructed from a
positive part and a negative part, and the measures
\(\mu^{+}\) and \(\mu^{-}\) are uniquely determined by
\(\mu\).

This perspective recontextualizes signed measures as formal
differences of positive components, directly mirroring the
construction of the integers from the natural numbers or the formation
of abstract Grothendieck groups
\cite{maclane1998,awodey2010,leinster2014}.

Indeed, every signed measure admits a representation

\[
\mu=\mu^{+}-\mu^{-},
\]

while every positive measure may be regarded as a special case by
taking \(\mu^{-}=0\).

This observation closely resembles familiar constructions elsewhere in
mathematics. Integers may be represented as differences of natural
numbers, and elements of Grothendieck groups may be represented as
differences of elements of an underlying commutative monoid
\cite{maclane1998,awodey2010,leinster2014}.

The analogy should not be taken too literally. Signed measures belong
to a richer measure-theoretic setting than an abstract Grothendieck
completion, incorporating countable additivity and the structure
encoded by the Jordan decomposition.

Nevertheless, the representation

\[
\mu=\mu^{+}-\mu^{-}
\]

which strongly suggests that signed measures are serving the role of
additive inverses for positive measures. The next section shows that
this intuition is not merely heuristic but is captured by a precise
universal property.

The pushforward construction extends immediately from positive
measures to signed measures. Given a measurable map

\[
f:\Omega\to\Omega',
\]

the pushforward of a signed measure \(\mu\) is defined by

\[
f_{*}\mu(E) = \mu(f^{-1}(E))
\]

for every measurable set \(E\in\mathcal F'\).

Since pushforward is additive,

\[
f_{*}(\mu+\nu) = f_{*}\mu+f_{*}\nu,
\]

it preserves the additive structure of signed measures. Moreover,

\[
(id_{\Omega})_{*}=id
\]

and

\[
(g\circ f)_{*} = g_{*}\circ f_{*}.
\]

Thus the constructions developed for positive measures extend
unchanged to signed measures.

The compatibility of pushforward with Jordan decomposition will play
an important role below. If

\[
\mu=\mu^{+}-\mu^{-},
\]

then

\[
f_{*}\mu = f_{*}\mu^{+} - f_{*}\mu^{-}.
\]

In particular, pushforward preserves the representation of a signed
measure as a difference of positive measures.

The collection \(\Meas_{\pm}(\Omega)\) carries a natural additive
structure. Given signed measures \(\mu\) and \(\nu\), define

\[
(\mu+\nu)(E) = \mu(E)+\nu(E).
\]

The zero measure serves as an identity element, and every signed
measure possesses an additive inverse given by

\[
(-\mu)(E) = -\mu(E).
\]

Consequently,

\[
(\Meas_{\pm}(\Omega),+,0)
\]

forms an abelian group.

Together with pushforward, these groups assemble into a functor

\[
\Meas_{\pm} : \mathbf{Meas}  \longrightarrow \mathbf{Ab}.
\]

The contrast with the previous section is now apparent. Positive
measures form commutative monoids, whereas signed measures form
abelian groups. The Jordan decomposition theorem explains how every
element of the latter may be expressed in terms of the former.

The next section shows that this relationship is not merely a useful
representation theorem. It is characterized by a universal property
that identifies signed measures as the canonical additive extension of
positive measures.
\section{The Universal Property}
\label{sec:org54a03ff}

The preceding sections have shown that positive measures form
commutative monoids, while signed measures form abelian groups.
Moreover, every signed measure admits a unique Jordan decomposition

\[
\mu=\mu^{+}-\mu^{-}.
\]

The Jordan decomposition theorem provides a distinguished
representation of every signed measure as a difference of positive
measures. For the universal property proved in the next section,
however, it will be useful to understand a more general phenomenon.

A signed measure may admit many representations as a difference of
positive measures. The following observation shows that any additive
construction defined on positive measures assigns the same value to all
such representations.

\begin{lemma}[Independence of Positive Representation]
\label{lem:independence}
Let \(\mu\in\Meas_{\pm}(\Omega)\), and suppose that

\[
\mu=\alpha-\beta=\alpha'-\beta'
\]

are two representations of \(\mu\) as a difference of finite positive
measures. Then

\[
\alpha+\beta'=\alpha'+\beta.
\]

Consequently, if

\[
\Phi:\Meas_{+}(\Omega)\to A
\]

is a commutative-monoid homomorphism into an abelian group \(A\), then

\[
\Phi(\alpha)-\Phi(\beta) = \Phi(\alpha')-\Phi(\beta').
\]
\label{org06de750}
\end{lemma}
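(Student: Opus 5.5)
The plan is to prove the first assertion by evaluating on sets, and then to obtain the second by applying \(\Phi\) to the resulting identity and cancelling in the group \(A\). Throughout I work inside the abelian group \(\Meas_{\pm}(\Omega)\), where every finite positive measure is also a finite signed measure. Nothing beyond the pointwise definitions of the operations is needed; in particular the Jordan decomposition theorem is not used.

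For the identity \(\alpha+\beta'=\alpha'+\beta\), fix a measurable set \(E\in\mathcal F\). The hypothesis \(\alpha-\beta=\alpha'-\beta'\) gives the equation of real numbers
\[
\alpha(E)-\beta(E)=\alpha'(E)-\beta'(E).
\]
All four values are finite, because the measures are finite positive measures with values in \([0,\infty)\). So we may add \(\beta(E)+\beta'(E)\) to both sides and obtain
\[
\alpha(E)+\beta'(E)=\alpha'(E)+\beta(E).
\]
By the definition of addition in \(\Meas_{+}(\Omega)\), this says \((\alpha+\beta')(E)=(\alpha'+\beta)(E)\). Since \(E\) was arbitrary, \(\alpha+\beta'=\alpha'+\beta\) as elements of \(\Meas_{+}(\Omega)\).

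For the consequence, apply the monoid homomorphism \(\Phi\) to this equality. Additivity of \(\Phi\) gives
\[
\Phi(\alpha)+\Phi(\beta')=\Phi(\alpha')+\Phi(\beta)
\]
in \(A\). Because \(A\) is an abelian group, we may subtract \(\Phi(\beta)+\Phi(\beta')\) from both sides. This yields \(\Phi(\alpha)-\Phi(\beta)=\Phi(\alpha')-\Phi(\beta')\).

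The only point needing care is the finiteness of the measures. The cancellation in the first step is legitimate only because the values are real numbers rather than possibly \(+\infty\). This is exactly where the restriction to finite measures enters, and it is also why the cancellation happens in \(\mathbb R\) rather than in the monoid \(\Meas_{+}(\Omega)\) itself.
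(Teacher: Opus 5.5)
Your proof is correct and follows the paper's argument. The paper gets \(\alpha+\beta'=\alpha'+\beta\) by rearranging directly in the abelian group \(\Meas_{\pm}(\Omega)\), which is your setwise cancellation in \(\mathbb R\) left implicit; it then applies \(\Phi\) and subtracts \(\Phi(\beta)+\Phi(\beta')\) exactly as you do.
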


\begin{proof}
Since

\[
\alpha-\beta=\alpha'-\beta',
\]

we have equality of signed measures

\[
\alpha+\beta'=\alpha'+\beta.
\]

Both \(\alpha+\beta'\) and \(\alpha'+\beta\) are finite positive
measures. Since

\[
\alpha+\beta'=\alpha'+\beta,
\]

applying the monoid homomorphism \(\Phi\) gives

\[
\Phi(\alpha+\beta') = \Phi(\alpha'+\beta).
\]

By additivity of \(\Phi\), this becomes

\[
\Phi(\alpha)+\Phi(\beta') = \Phi(\alpha')+\Phi(\beta).
\]

Since \(A\) is an abelian group, subtracting
\(\Phi(\beta)+\Phi(\beta')\) from both sides yields

\[
\Phi(\alpha)-\Phi(\beta) = \Phi(\alpha')-\Phi(\beta').
\]
\end{proof}

Because Lemma\textasciitilde{}\ref{lem:independence} shows that the value of
\(\widetilde{\Phi}\) depends only on the underlying signed measure
and not on a particular representation as a difference of positive
measures, the failure of pushforward to preserve mutual singularity
is irrelevant. Consequently, the extension remains compatible with
pushforward even when \(f_{*}\mu^{+}\) and \(f_{*}\mu^{-}\) are not
the Jordan components of \(f_{*}\mu\).
\subsection{Statement of the universal property}
\label{sec:orgeccd557}

The relationship between positive and signed measures is summarized
by the following factorization property.

\[
\begin{tikzcd}[row sep=large, column sep=large]
\Meas_{+}(\Omega)
  \arrow[r, "\iota"]
  \arrow[rd, "\Phi"']
&
\Meas_{\pm}(\Omega)
  \arrow[d, dashed, "\widetilde{\Phi}"]
\\
&
A
\end{tikzcd}
\]

Given an abelian group \(A\) and a commutative-monoid homomorphism

\[
\Phi:\Meas_{+}(\Omega)\to A,
\]

the question is whether \(\Phi\) factors uniquely through the
inclusion of positive measures into signed measures. The following
theorem shows that it does.

\begin{theorem}[Universal Property]
\label{thm:universal}
Let \(\Omega\) be a measurable space and let \(A\) be an abelian
group. For every commutative-monoid homomorphism

\[
\Phi:\Meas_{+}(\Omega)\to A,
\]

there exists a unique group homomorphism

\[
\widetilde{\Phi}:\Meas_{\pm}(\Omega)\to A
\]

such that

\[
\widetilde{\Phi}(\mu)=\Phi(\mu)
\]

for every positive measure
\(\mu\in\Meas_{+}(\Omega)\).
\label{orgf73a173}
\end{theorem}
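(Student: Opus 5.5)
The plan is the standard Grothendieck-type argument, with Jordan decomposition supplying surjectivity of differences and Lemma~\ref{lem:independence} supplying well-definedness.

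\emph{Definition.} Given $\mu\in\Meas_{\pm}(\Omega)$, the Jordan decomposition theorem writes $\mu=\mu^{+}-\mu^{-}$ with $\mu^{\pm}\in\Meas_{+}(\Omega)$. I would define
\[
\widetilde{\Phi}(\mu)=\Phi(\alpha)-\Phi(\beta)
\]
for \emph{any} representation $\mu=\alpha-\beta$ with $\alpha,\beta$ finite positive measures. Such a representation exists because of the Jordan decomposition. By Lemma~\ref{lem:independence}, the value does not depend on the representation chosen. Working with arbitrary representations, rather than only the Jordan one, is the point of the design: it avoids having to relate $(\mu+\nu)^{\pm}$ to $\mu^{\pm},\nu^{\pm}$, which is not simple because mutual singularity is not preserved under addition.

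\emph{Homomorphism and extension.} Take $\mu=\alpha-\beta$ and $\nu=\gamma-\delta$. Then $\mu+\nu=(\alpha+\gamma)-(\beta+\delta)$ is again a difference of finite positive measures. Applying additivity of $\Phi$ and well-definedness gives
\[
\widetilde{\Phi}(\mu+\nu)=\Phi(\alpha)+\Phi(\gamma)-\Phi(\beta)-\Phi(\delta)=\widetilde{\Phi}(\mu)+\widetilde{\Phi}(\nu).
\]
For a positive $\mu$, use the representation $\mu=\mu-0$. Since a monoid homomorphism satisfies $\Phi(0)=0$, this gives $\widetilde{\Phi}(\mu)=\Phi(\mu)$.

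\emph{Uniqueness.} Let $\Psi$ be any group homomorphism agreeing with $\Phi$ on $\Meas_{+}(\Omega)$. Then $\Psi(\mu)=\Psi(\mu^{+})-\Psi(\mu^{-})=\Phi(\mu^{+})-\Phi(\mu^{-})=\widetilde{\Phi}(\mu)$. This step uses that $\Meas_{+}(\Omega)$ generates $\Meas_{\pm}(\Omega)$ as a group, which is the existence half of Jordan decomposition.

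\emph{Main obstacle.} There is no real difficulty; the only substantive input is that every finite signed measure is a difference of finite positive measures. The one point I would check carefully is that the cancellation step inside Lemma~\ref{lem:independence} is legitimate. Passing from $\alpha-\beta=\alpha'-\beta'$ to $\alpha+\beta'=\alpha'+\beta$ happens in the group $\Meas_{\pm}(\Omega)$. The resulting equality is pointwise equality of real-valued set functions, so it also holds in $\Meas_{+}(\Omega)$, and $\Phi$ can then be applied. This works precisely because $\Meas_{+}(\Omega)$ embeds in $\Meas_{\pm}(\Omega)$ and is therefore cancellative.
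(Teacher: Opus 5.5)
Your proof is correct and takes essentially the same route as the paper: you extend via $\mu=\mu-0$, handle the non-Jordan representation of a sum with Lemma~\ref{lem:independence}, and get uniqueness from $\Psi(\mu)=\Psi(\mu^{+})-\Psi(\mu^{-})$. The only difference is cosmetic: the paper defines $\widetilde{\Phi}$ through the Jordan decomposition, cites its uniqueness for well-definedness, and invokes the lemma only in the additivity step, whereas you allow arbitrary representations from the start, which makes explicit that only the existence half of Jordan decomposition is needed.
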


Thus every additive construction defined on positive measures factors
uniquely through signed measures.
\subsection{The Extension Forced by Jordan Decomposition}
\label{sec:org46bee0a}

Let

\[
\Phi:\Meas_{+}(\Omega)\to A
\]

be a commutative-monoid homomorphism.

The Jordan decomposition theorem states that every signed measure
admits a unique representation

\[
\mu=\mu^{+}-\mu^{-}.
\]

Since \(A\) is an abelian group, subtraction is available in the
codomain. Consequently, any additive extension of \(\Phi\) must
satisfy

\[
\widetilde{\Phi}(\mu) = \Phi(\mu^{+}) - \Phi(\mu^{-}).
\]

Jordan decomposition therefore does not merely suggest an extension;
it determines the only possible additive extension.
\subsection{Proof of the Universal Property}
\label{sec:orgf995b83}

\begin{proof}
Define

\[
\widetilde{\Phi}(\mu) = \Phi(\mu^{+}) - \Phi(\mu^{-})
\]

for every signed measure

\[
\mu=\mu^{+}-\mu^{-}.
\]

Because the Jordan decomposition is unique, this definition is
well-defined.

If \(\mu\) is positive, then

\[
\mu=\mu-0,
\]

and therefore

\[
\widetilde{\Phi}(\mu) = \Phi(\mu)-\Phi(0) = \Phi(\mu).
\]

Thus \(\widetilde{\Phi}\) extends \(\Phi\).

To verify additivity, write

\[
\mu=\mu^{+}-\mu^{-},
\qquad
\nu=\nu^{+}-\nu^{-}.
\]

Then

\[
\mu+\nu = (\mu^{+}+\nu^{+}) - (\mu^{-}+\nu^{-}),
\]

which expresses \(\mu+\nu\) as a difference of positive measures.
Although this need not be the Jordan decomposition of \(\mu+\nu\), the
preceding lemma shows that the value of \(\widetilde{\Phi}\) is
independent of the chosen positive representation. We may therefore
compute using this representation.

Using the additivity of \(\Phi\) on positive measures, we obtain

\begin{equation*}
\begin{aligned}
\widetilde{\Phi}(\mu+\nu)
&=
\Phi(\mu^{+}+\nu^{+})
-
\Phi(\mu^{-}+\nu^{-}) \\
&=
\Phi(\mu^{+})
+
\Phi(\nu^{+})
-
\Phi(\mu^{-})
-
\Phi(\nu^{-}) \\
&=
\bigl(\Phi(\mu^{+})-\Phi(\mu^{-})\bigr)
+
\bigl(\Phi(\nu^{+})-\Phi(\nu^{-})\bigr) \\
&=
\widetilde{\Phi}(\mu)
+
\widetilde{\Phi}(\nu).
\end{aligned}
\end{equation*}

Hence \(\widetilde{\Phi}\) is a group homomorphism.

Finally, suppose

\[
\Psi:\Meas_{\pm}(\Omega)\to A
\]

is another group homomorphism extending \(\Phi\). For every signed
measure

\[
\mu=\mu^{+}-\mu^{-},
\]

we have

\begin{equation*}
\begin{aligned}
\Psi(\mu)
&=
\Psi(\mu^{+}-\mu^{-}) \\
&=
\Psi(\mu^{+})
-
\Psi(\mu^{-}) \\
&=
\Phi(\mu^{+})
-
\Phi(\mu^{-}) \\
&=
\widetilde{\Phi}(\mu).
\end{aligned}
\end{equation*}

Therefore \(\Psi=\widetilde{\Phi}\), proving uniqueness.
\end{proof}

The theorem shows that signed measures are characterized by the
requirement that additive maps defined on positive measures extend
uniquely to an abelian group. In this sense, the passage from
positive measures to signed measures is determined by a universal
property rather than by a particular construction.
\subsection{Compatibility with pushforward}
\label{sec:orgcd789fc}

The universal property is compatible with measurable maps.

Let

\[
f:\Omega\to\Omega'
\]

be measurable. Pushforward induces commutative-monoid homomorphisms

\[
f_*:\Meas_{+}(\Omega)\to\Meas_{+}(\Omega')
\]

and group homomorphisms

\[
f_*:\Meas_{\pm}(\Omega)\to\Meas_{\pm}(\Omega').
\]

If

\[
\mu=\mu^{+}-\mu^{-},
\]

then

\[
f_*\mu = f_*\mu^{+} - f_*\mu^{-}.
\]

Although \(f_*\mu^{+}\) and \(f_*\mu^{-}\) need not remain mutually
singular, the extension construction depends only on the resulting
signed measure and not on a particular positive representation.
Consequently, the correspondence

\[
\Phi \longmapsto \widetilde{\Phi}
\]

is compatible with pushforward and therefore natural in
\(\Omega\).

Thus the universal property is compatible with the functorial
structure developed in the previous sections. Indeed, both constructions assign to a representation

\[
\mu=\alpha-\beta
\]

the value

\[
\Phi(f_*\alpha)-\Phi(f_*\beta),
\]

and the preceding lemma guarantees independence from the chosen
representation.
\subsection{Example: Integration against a bounded function}
\label{sec:org4dac7c8}

Let \(g:\Omega\to\mathbb R\) be a bounded measurable function. For
every positive measure \(\mu\), define

\[
\Phi_g(\mu) = \int_{\Omega} g\,d\mu.
\]

Since integration is additive in the measure argument,

\[
\Phi_g(\mu+\nu) = \Phi_g(\mu)+\Phi_g(\nu),
\]

so \(\Phi_g\) is a commutative-monoid homomorphism

\[
\Phi_g:\Meas_{+}(\Omega)\to\mathbb R.
\]

Since \(\Phi_g\) is a commutative-monoid homomorphism,
Theorem\textasciitilde{}\ref{thm:universal} implies the existence of a unique
group homomorphism

\[
\widetilde{\Phi}_{g}:\Meas{\pm}(\Omega)\to\mathbb R
\]

extending \(\Phi_g\).

If

\[
\mu=\mu^{+}-\mu^{-},
\]

then the extension is given by

\[
\widetilde{\Phi}_g(\mu) =  \int_{\Omega} g\,d\mu^{+} - \int_{\Omega} g\,d\mu^{-}.
\]

This is precisely the classical definition of integration with
respect to a signed measure.

Thus the familiar extension of integration from positive measures to
signed measures is not merely a convenient definition. It is the
unique extension compatible with the additive structure of positive
measures, and therefore an instance of the universal property
established above.
\section{Relation to Grothendieck Completion}
\label{sec:orga013736}

The universal property established in the previous section places the
passage from positive measures to signed measures within a broader
family of completion constructions that occur throughout algebra and
category theory. In particular, it invites comparison with the
classical Grothendieck construction, which associates an abelian group
to a commutative monoid by adjoining additive inverses
\cite{maclane1998,awodey2010,leinster2014}.

A signed measure is not merely an arbitrary extension of the notion of
measure. Rather, it provides a systematic way to interpret formal
differences of positive measures as mathematical objects in their own
right.
\subsection{Comparison with the classical construction}
\label{sec:org161bcbc}

Let \(M\) be a commutative monoid. The Grothendieck construction
produces an abelian group \(G(M)\) together with a monoid homomorphism

\[
\iota : M \to G(M)
\]

such that every monoid homomorphism from \(M\) into an abelian group
factors uniquely through \(G(M)\)
\cite{maclane1998,awodey2010}.

The universal property established in
Theorem \ref{thm:universal} exhibits a closely analogous phenomenon.
For a measurable space \(\Omega\), the commutative monoid

\[
\Meas_{+}(\Omega)
\]

of finite positive measures admits a canonical embedding into the
abelian group

\[
\Meas_{\pm}(\Omega),
\]

and every additive map from positive measures into an abelian group
extends uniquely to signed measures.

The analogy becomes particularly transparent through Jordan
decomposition. Every signed measure may be represented as

\[
\mu=\mu^{+}-\mu^{-},
\]

just as an element of a Grothendieck group may be represented as a
difference of elements of the underlying monoid. From this
perspective, signed measures play the role of additive inverses for
positive measures, while the universal property identifies
\(\Meas_{\pm}(\Omega)\) as the canonical additive extension of
\(\Meas_{+}(\Omega)\).
\subsection{Limits of the analogy}
\label{sec:org801c579}

The analogy with Grothendieck completion is strong but should not be
interpreted as an exact identification.

The classical Grothendieck construction is purely algebraic. Starting
from a commutative monoid, it freely adjoins additive inverses and
produces an abelian group characterized by a universal property. The
construction depends only on the underlying additive structure of the
monoid.

The passage from positive measures to signed measures exhibits the
same universal behavior, but it occurs within a substantially richer
environment. Positive measures are not merely elements of a
commutative monoid. They are countably additive set functions defined
on measurable spaces, and their additive structure is constrained by
the measure-theoretic requirement of countable additivity.

Moreover, the Jordan decomposition theorem provides more than the
existence of additive inverses. Every signed measure admits a
distinguished decomposition

\[
\mu=\mu^{+}-\mu^{-},
\]

in which the positive and negative parts are mutually singular.
Ordinary Grothendieck completion provides no analogous canonical
representation. In general, an element of a Grothendieck group may be
represented in many different ways as a formal difference, with no
preferred choice.

Thus signed measures possess additional structure beyond that required
by the universal property alone. The universal property identifies
their additive role, while countable additivity and Jordan
decomposition supply the analytical content that makes the theory
useful.

For this reason, signed measures should not be viewed as merely an
instance of Grothendieck completion. Rather, they provide a
measure-theoretic realization of the same universal phenomenon,
enriched by structures that have no counterpart in the purely
algebraic setting.
\section{Discussion and Further Directions}
\label{sec:orgd5c45cb}

The integration example illustrates a broader phenomenon. Classical
constructions that extend naturally from positive measures to signed
measures often do so because signed measures satisfy the universal
property proved in Theorem \ref{thm:universal}. Every additive map
defined on positive measures extends uniquely to signed measures, so
the passage

\[
\Meas_{+}(\Omega) \longrightarrow \Meas_{\pm}(\Omega)
\]

is determined by an abstract categorical requirement rather than by a
particular analytical construction.

From this perspective, the Jordan decomposition theorem acquires a new
interpretation. Classically, Jordan decomposition is viewed as a
structural description of signed measures. The universal property
shows that it also provides the mechanism through which additive
inverses enter measure theory. In this sense, signed measures may be
understood as the canonical additive completion of positive measure
theory.
\subsection{Relation to categorical probability}
\label{sec:org8653ac2}

Measures and probability measures have played a central role in the
development of categorical probability. Examples include the Giry
monad \cite{giry1982}, categorical treatments of distributions
\cite{kock2006}, and more recent approaches based on Markov
categories and synthetic probability theory
\cite{fritz2019,perrone2022}.

Most of these frameworks begin with positive measures, probability
measures, or stochastic maps as primitive objects. The present result
suggests that the passage to signed measures is not merely an
additional construction but reflects a universal completion process
already implicit in the additive structure of positive measure
theory.

Accordingly, the universal property established here may be viewed as
complementary to existing categorical approaches to probability. It
identifies a structural principle underlying the transition from
positive to signed measures and thereby provides a bridge between
classical measure theory and categorical formulations of probability.
\subsection{Signed kernels and categorical probability}
\label{sec:orge0bf234}

The theorem proved in this paper concerns measures on a fixed
measurable space. A natural next step is to consider kernels.

Positive kernels, Markov kernels, and stochastic maps play a central
role throughout categorical probability. Since signed measures arise
as a canonical additive extension of positive measures, it is natural
to ask whether signed kernels arise as a corresponding additive
completion of positive kernels.

Such a result would provide a linearized setting for categorical
probability analogous to the role played by signed measures in
classical analysis. It may also offer a natural route toward enriched
or additive versions of Markov categories in which subtraction is
available in addition to convex combination.

Whether a universal characterization of signed kernels exists remains
an open question.
\subsection{Open questions}
\label{sec:org80c3ec5}

Several natural directions remain open. First, it would be desirable
to determine the precise relationship between the universal property
established here and the classical Grothendieck construction. While
the analogy is strong, the measure-theoretic setting incorporates
additional structure through countable additivity and Jordan
decomposition.

Second, one may ask whether other foundational constructions of
measure theory admit similar universal characterizations. A
particularly interesting candidate is the Carathéodory extension
theorem, whose role in constructing measures from premeasures
suggests a possible categorical interpretation.

Another natural question is whether the finiteness assumption may be
removed. The finite setting is particularly well suited to the
present treatment because every signed measure admits a Jordan
decomposition into finite positive measures, and the resulting
universal property may be formulated entirely in algebraic terms.

For more general classes of measures, additional analytical issues
arise. One must specify appropriate notions of convergence and
compatibility with infinite values, and the relevant categories may
require topological or measure-theoretic structure beyond that of
commutative monoids and abelian groups. Determining the correct
categorical framework for such extensions remains an interesting
direction for future investigation.

Finally, the emergence of categorical probability over the past
several decades raises the broader question of which aspects of
measure theory are most naturally understood through universal
properties. The result proved here suggests that at least some of the
central constructions of classical measure theory may admit such an
interpretation.

\clearpage
\bibliographystyle{alpha}
\bibliography{refs/bib/signed_measures}
\end{document}